\theoremstyle{definition}
\newtheorem{defn}{Definition}[section]
\newtheorem{example}[defn]{Example}
\newtheorem{rem}[defn]{Remark}
\theoremstyle{plain}
\newtheorem{thm}[defn]{Theorem}
\newtheorem{prop}[defn]{Proposition}
\newtheorem{cor}[defn]{Corollary}
\newtheorem{question}[defn]{Question}
\newcommand{\KH}{\operatorname{KH}}
\numberwithin{equation}{section}
\newtheorem{proposition}{Proposition}[subsection]
\newtheorem{exam}[proposition]{Example}
\newtheorem{coro}[proposition]{Corollary}
\newtheorem{remark}[proposition]{Remark}
\title[On the maximal degree of the Khovanov homology]{On the maximal degree of the Khovanov homology}
\author{Keiji Tagami}
\date{\today}
\address{
Department of Mathematics, Faculty of Science and Technology, Tokyo University
of Science, Noda, Chiba, 278-8510, Japan
}
\email{tagami\_keiji@ma.noda.tus.ac.jp}
\begin{document}
\maketitle
\begin{abstract}
It is known that the maximal homological degree of the Khovanov homology of a knot gives a lower bound of the minimal positive crossing number of the knot. 
In this paper, we show that the maximal homological degree of the Khovanov homology of a cabling of a knot gives a lower bound of the minimal positive crossing number of the knot. 
\end{abstract}
%
\section{Introduction}\label{intro}
In \cite{khovanov1}, for each oriented link $L$, Khovanov defined a graded chain complex whose graded Euler characteristic is equal to the Jones polynomial of $L$. Its homology is a link invariant and called the Khovanov homology. 
Throughout this paper, we denote the homological degree $i$ term of the rational Khovanov homology of a link $L$ by $\KH^{i}(L)$.  
\par
Let $L$ be an oriented link. 
By $i_{\max}(L)$, we denote the maximal homological degree of the Khovanov homology of $L$, and 
by $c_{+}(L)$, we denote the minimal number of the positive crossings of diagrams of $L$. 
Note that $i_{\max}(L)$ is not negative. 
In fact, any link has nonzero Khovanov homology in degree zero because the Lee homology \cite{lee1} is not trivial in homological degree zero. 
Then, it is known that $i_{\max}(L)$ gives a lower bound of $c_{+}(L)$ (Proposition~$\ref{i_max}$). 
From this fact, it seems that the Khovanov homology estimates the positivity of links. 
\par
Sto{\v s}i{\'c} \cite{stosic2} showed that $i_{\max}(T_{2k, 2kt})$ is $2k^{2}t$, where $T_{p, q}$ is the positive $(p, q)$-torus link. 
By using the same method as Sto{\v s}i{\'c}, the author \cite{tagami1} proved that $i_{\max}(T_{2k+1, (2k+1)t})$ is $2k(k+1)t$. 
The author also computed the maximal degree for a cabling of any knot. 
\par
In this paper, we give some properties of the maximal degree of the Khovanov homology. 
In particular, we show that the maximal homological degree of the Khovanov homology of a cabling of a knot gives a lower bound of the minimal positive crossing number of the knot as follows: 
\begin{thm}\label{main}
Let $K$ be an oriented knot. 
Denote the $(p, q)$-cabling of $K$ by $K(p,q)$. 
For any positive integers $p$ and $t$, we assume that each component of $K(p,pt)$ has the orientation induced by $K$, that is, each component of $K(p,pt)$ is homologous to $K$ in the tubular neighborhood of $K$. 
Then, if $t\leq 2c_{+}(K)$, we have the following inequality: 
\begin{align*}
\frac{i_{\max}(K(p, pt))}{p^{2}}\leq c_{+}(K). 
\end{align*}
In particular, we obtain 
\begin{align*}
 \frac{i_{\max}(K(p, 0))}{p^{2}}\leq c_{+}(K).  
\end{align*}
\end{thm}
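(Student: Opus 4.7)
The plan is to apply Proposition~\ref{i_max} to a carefully chosen diagram of $K(p, pt)$. Fix a diagram $D$ of $K$ with $c_{+}(D) = c_{+}(K)$; set $c_{-} = c_{-}(D)$ and $w = w(D) = c_{+}(K) - c_{-}$. The blackboard $p$-parallel $D^{(p)}$ is a diagram of $K(p, pw)$ with exactly $p^{2} c_{+}(K)$ positive and $p^{2} c_{-}$ negative crossings. To pass from framing $w$ to framing $t$, insert $|t - w|$ full twists of the appropriate sign on the $p$ parallel strands, each contributing $p(p-1)$ crossings.

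When $t \leq w$, only negative full twists are inserted, so the resulting diagram already has $p^{2} c_{+}(K)$ positive crossings, and Proposition~\ref{i_max} immediately gives the bound. This easy case covers in particular $t = 0$ whenever $D$ has nonnegative writhe.

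The substance of the theorem lies in the range $w < t \leq 2 c_{+}(K)$, where naively inserting $(t - w)$ positive full twists inflates the positive crossing count by $p(p-1)(t-w)$. Here I would distribute these positive full twists among the $c_{+} + c_{-}$ cabled crossings of $D$ and absorb each one locally via braid-theoretic identities: at each cabled crossing (positive or negative), one positive full twist should be mergeable into the adjacent $2p$-strand tangle in a way that keeps the local positive crossing count at most $p^{2}$, at the cost of additional negative crossings. This would yield an absorption capacity of $c_{+} + c_{-}$ full twists, matching $t - w = t - c_{+} + c_{-} \leq c_{+} + c_{-}$, which is exactly the hypothesis $t \leq 2 c_{+}(K)$. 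The main obstacle is making this local absorption explicit---describing the braid isotopy at each cabled crossing uniformly for both signs and checking that the cumulative effect is a diagram of $K(p, pt)$ with at most $p^{2} c_{+}(K)$ positive crossings---a step that I expect draws heavily on the braid-level arguments in the author's earlier computation of the maximal Khovanov degree of cables.
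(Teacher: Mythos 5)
Your strategy---construct a diagram of $K(p,pt)$ with at most $p^{2}c_{+}(K)$ positive crossings and then quote Proposition~\ref{i_max}---cannot work, and the obstruction is elementary. With the orientation convention of the theorem, each pair of components of $K(p,pt)$ has linking number $t$, so in \emph{every} diagram $D'$ of $K(p,pt)$ the positive crossings between two fixed components number at least $2t$, whence $c_{+}(D')\geq 2\binom{p}{2}t=p(p-1)t$. For $p\geq 3$ and $t=2c_{+}(K)>0$ this gives $c_{+}(K(p,pt))\geq 2p(p-1)c_{+}(K)>p^{2}c_{+}(K)$: no diagram of the kind you want exists, and Proposition~\ref{i_max} applied to \emph{any} diagram yields a bound strictly weaker than the theorem (this is precisely why the theorem is claimed to improve on Proposition~\ref{i_max}). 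The same invariance of pairwise crossing signs kills the local ``absorption'': a positive cabled crossing composed with a positive full twist on $p$ strands has inter-strand signed crossing number $p^{2}+p(p-1)$, so it cannot be redrawn with only $p^{2}$ positive crossings, and a negative cabled crossing composed with a positive full twist is not a negative tangle (the two strands receiving the twist acquire positive linking). Your bookkeeping also does not close on its own terms: if each of the $c_{+}+c_{-}$ cabled crossings ends with ``at most $p^{2}$'' positive crossings, the total is $p^{2}(c_{+}+c_{-})$, not $p^{2}c_{+}(K)$. Only your easy case $t\leq w(D)$ is correct.

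The actual proof is homological, not diagrammatic. It takes the explicit cable diagram $D(p,pt)$ of \cite{tagami1} (the blackboard $p$-parallel of $D$ together with a twist region), which in the relevant range has far more than $p^{2}c_{+}(K)$ positive crossings, and invokes the vanishing result of \cite[Lemma~4.3]{tagami1}: the unnormalized homology $H^{i}$ of that diagram vanishes for $i>p^{2}\bigl(c_{+}(D)+c_{-}(D)\bigr)$, provided the number of full twists lies in a suitable window above the writhe. That lemma is proved by Sto{\v s}i{\'c}-type induction using the long exact sequences obtained by resolving crossings in the twist region; it is exactly the step where homology beats crossing counts. Subtracting the $p^{2}c_{-}(D)$ negative crossings of the cable diagram normalizes the bound to $p^{2}c_{+}(K)$, and the admissible range of $t$ is then extended down to $-\infty$ by adding negative kinks to $D$, which increases $c_{-}(D)$ without changing $c_{+}(D)$. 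Any repair of your plan must import that vanishing lemma (or an equivalent homological input); Proposition~\ref{i_max} alone is provably insufficient for $p\geq 3$.
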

In many cases, $p^{2}c_{+}(K)$ is not greater than the minimal positive crossing number of $(p, 0)$-cabling of $K$. 
Hence the inequality of Theorem~\ref{main} is possibly stronger than that of Proposition~\ref{i_max}.
\par
Note that there are some works on the crossing numbers of cable links (for example, see \cite{Freedman-He}, \cite[Problem 1.68]{Kirby-problem} and \cite{Stoimenow4}). 
\par 
This paper is organized as follows: 
In Section~$\ref{property}$, we give the proof of Theorem~$\ref{main}$.
In Appendix, we introduce some properties of the maximal degree of the Khovanov homology.  
\par
We refer some informations (knot names, values of knot invariants and so on) in \cite{knot_info}. 
Throughout this paper, we use the same definition and notation of the Khovanov homology as in \cite[p.2848-p.2850]{tagami1}. 
In particular, for a link diagram $D$ of a link $L$, we denote the unnormalized Khovanov homology by $H^{i}(D)$, that is, 
\begin{center}
$\KH^{i}(L)=H^{i+c_{-}(D)}(D)$, 
\end{center}
where $c_{-}(D)$ is the number of the negative crossings of $D$. 
\section{The positivity of knots and the maximal degree of the Khovanov homology}\label{property}
In this section, we give some estimates on the minimal positive crossing numbers of knots. 
In particular, we prove Theorem~\ref{main}. 
\par
For any oriented link $L$, define 
\begin{align*}
i_{\max}(L)&:=\max\{i\in \mathbf{Z}\mid \KH^{i}(L)\neq 0\}, \\ 
i_{\min}(L)&:=\min\{i\in \mathbf{Z}\mid \KH^{i}(L)\neq 0\}, \\ 
c_{\pm }(L)&:=\min\{c_{\pm }(D)\mid D \text{\ is a diagram of }L\}, 
\end{align*}
where $c_{+}(D)$ ($c_{-}(D)$) is the number of the positive (negative) crossings of $D$. 
A link $L$ is {\it positive (negative)} if $c_{-}(L)=0$ ($c_{+}(L)=0$). 
The following is an immediate consequence of the definition of the Khovanov homology. 
\begin{prop}[{cf. \cite[Proposition~2.2]{tagami2}}]\label{i_max}
For any oriented link $L$, we have  
$i_{\max}(L)\leq c_{+}(L)$ and $-i_{\min}(L)\leq c_{-}(L)$. 
\end{prop}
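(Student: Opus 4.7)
The plan is to read off both inequalities directly from the support of the Khovanov chain complex, using the normalization convention $\KH^{i}(L) = H^{i+c_{-}(D)}(D)$ recalled in the introduction.

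First I would fix an arbitrary diagram $D$ of $L$, with $c_{+} = c_{+}(D)$ positive crossings and $c_{-} = c_{-}(D)$ negative crossings. Recall that the unnormalized chain group $H^{i}(D)$ is built as a direct sum over vertices of the cube of resolutions indexed by subsets of the crossing set of $D$ whose cardinality is $i$. Consequently, $H^{i}(D)$ vanishes unless $0 \leq i \leq c_{+} + c_{-}$, and the same vanishing then holds for its cohomology.

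Applying the degree shift $\KH^{i}(L) = H^{i+c_{-}}(D)$, the inequality $0 \leq i + c_{-} \leq c_{+}+c_{-}$ becomes $-c_{-} \leq i \leq c_{+}$. Hence $i_{\max}(L) \leq c_{+}(D)$ and $-i_{\min}(L) \leq c_{-}(D)$ for every diagram $D$ of $L$. Since the left-hand sides depend only on $L$ while the right-hand sides can be minimized over diagrams, taking the infimum over $D$ yields $i_{\max}(L) \leq c_{+}(L)$ and $-i_{\min}(L) \leq c_{-}(L)$.

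There is essentially no obstacle: the statement is a bookkeeping consequence of where the chain groups live, so the only thing requiring attention is keeping the normalization shift straight, which the paper has already pinned down. No spectral sequence, no long exact sequence, and no case analysis is needed.
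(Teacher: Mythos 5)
Your argument is correct and is exactly what the paper intends: the paper states this proposition without proof, calling it ``an immediate consequence of the definition,'' and the intended reasoning is precisely your observation that the cube of resolutions places the unnormalized complex in homological degrees $0$ through $c_{+}(D)+c_{-}(D)$, so the shift by $c_{-}(D)$ confines $\KH^{i}(L)$ to $-c_{-}(D)\leq i\leq c_{+}(D)$, and one then minimizes over diagrams. No discrepancy with the paper's approach.
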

%
%
%
%

Unfortunately, Proposition~\ref{i_max} is not sufficient to determine whether a given link is negative (or positive) (for example, see Example~\ref{example1}). 
In \cite{tagami1}, the author computed $i_{\max}$ for a cable link. 
By the computation, we obtain new estimates of the minimal positive crossing numbers of knots (Theorem~\ref{main}). 
\begin{proof}[Proof of Theorem~\ref{main}]
Let $D$ be a diagram of $K$ with $c_{+}(K)$ positive crossings. 
Let $c_{-}(D)$ be the number of the negative crossings of $D$. 
\par
Suppose that $p=2k$. 
By \cite[Lemma~$4.3$ $(1)$]{tagami1} (with $j=2k$ and $m=0$), we have 
\begin{center}
$H^{i}(D^{0}(2k, 2k(n+c_{+}(K)-c_{-}(D))+2k))=0$
\end{center}
for $i>(2k)^{2}(c_{+}(K)+c_{-}(D))$ and $0\leq n< c_{+}(K)+c_{+}(D)$.
\footnote{In \cite[Lemma~$4.3$]{tagami1}, we put $l:=c_{+}(D)+c_{+}(D)$ and $f:=c_{+}(D)-c_{-}(D)$. In our setting, $c_{+}(D)=c_{+}(K)$. }
Here, the diagram $D^{0}(2k, 2k(n+c_{+}(K)-c_{-}(D))+2k)$ is introduced in \cite[Definitions~3.1, 3.9 and 4.1, and Figures~$4$ and $8$]{tagami1}. 
Moreover $D^{0}(2k, 2k(n+c_{+}(K)-c_{-}(D))+2k)$ is equal to $D(2k, 2k(n+c_{+}(K)-c_{-}(D)+1))$, where $D(p, q)$ is a diagram of $K(p, q)$ introduced in \cite[Definition~4.1 and Figures~$7$ and 9]{tagami1}. 
Since the diagram $D(2k, 2k(n+c_{+}(K)-c_{-}(D)+1))$ has $(2k)^{2}c_{-}(D)$ negative crossings, by putting $t=n+c_{+}(K)-c_{-}(D)+1$, we obtain 
\[
\KH^{i}(K(2k, 2kt))=0 
\]
for $i>(2k)^{2}c_{+}(K)$ and $c_{+}(K)-c_{-}(D)+1\leq t\leq 2c_{+}(K)$. 
In particular, we have 
$ i_{\max}(K(2k, 2kt)) \leq (2k)^{2}c_{+}(K)$. 
By using the negative first Reidemeister move repeatedly, we can take as large a $c_{-}(D)$ as we want. 
Hence, for $-\infty< t\leq 2c_{+}(K)$,  we have 
\begin{align*}
 i_{\max}(K(2k, 2kt)) \leq (2k)^{2}c_{+}(K). 
\end{align*}
\par
Suppose that $p=2k+1$. 
By \cite[Lemma~$4.3$ $(2)$]{tagami1} and the same discussion as above, we obtain 
\begin{align*}
 i_{\max}(K(2k+1, (2k+1)t)) \leq (2k+1)^{2}c_{+}(K)
\end{align*}
for $-\infty< t\leq 2c_{+}(K)$. 
\end{proof}
\begin{cor}
If $K$ is a negative knot, then $i_{\max}(K(p, 0))$ is zero for any positive integer $p$. 
\end{cor}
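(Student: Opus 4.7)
The plan is to apply Theorem~\ref{main} to the negative knot $K$ with $t = 0$, and then combine the resulting upper bound with the general lower bound $i_{\max}(L) \geq 0$ discussed in the introduction.

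First I would verify that the hypothesis of Theorem~\ref{main} is satisfied. Since $K$ is negative, we have $c_{+}(K) = 0$, hence $t = 0$ satisfies $t \leq 2c_{+}(K) = 0$. Therefore Theorem~\ref{main} applies to the $(p, 0)$-cabling and yields
\begin{align*}
\frac{i_{\max}(K(p, 0))}{p^{2}} \leq c_{+}(K) = 0,
\end{align*}
so $i_{\max}(K(p,0)) \leq 0$.

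Next I would invoke the general fact (recalled just before Proposition~\ref{i_max}) that $i_{\max}(L) \geq 0$ for every oriented link $L$, since the Lee homology of any link is nontrivial in homological degree zero. Applying this to $L = K(p, 0)$ gives $i_{\max}(K(p,0)) \geq 0$. Combining the two inequalities yields $i_{\max}(K(p, 0)) = 0$, as required.

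There is no real obstacle here; the statement is essentially a direct specialization of Theorem~\ref{main} to the case $c_{+}(K) = 0$, together with the nonnegativity of $i_{\max}$. The only thing to be mindful of is checking that $t = 0$ is an admissible value in the theorem's hypothesis when $c_{+}(K) = 0$, which it is (the inequality $0 \leq 0$ holds).
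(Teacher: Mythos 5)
Your proof is correct and matches the paper's intended argument: the corollary is stated as an immediate consequence of Theorem~\ref{main} (the ``in particular'' case $t=0$, admissible since $c_{+}(K)=0$), combined with the nonnegativity of $i_{\max}$ noted in the introduction. Nothing further is needed.
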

%
%
%
\begin{example}\label{example1}
We see that $i_{\max}(8_{21})=i_{\max}(\overline{9_{45}})=i_{\max}(\overline{9_{46}})=0$ (see \cite{knot_info}). 
Hence, the inequality in Proposition~$\ref{i_max}$ cannot determine whether these knots are negative. 
However, by ``The Mathematica Package KnotTheory" \cite{Bar-Natan-2}, we have 
\[
i_{\max}(8_{21}(2,0))=i_{\max}(\overline{9_{45}}(2,0))=2, \ i_{\max}(\overline{9_{46}}(2,0))=4. 
\] 
By Theorem~$\ref{main}$, these knots are not negative. 
\end{example}
\begin{rem}
Let $K$ be an oriented knot. 
Then, $\KH^{i}(K(p, pt))=0$ if $i$, $n$ and $p$ satisfy one of the following conditions (see also Figure~\ref{fig1}): 
\begin{enumerate}
\item $i>p^2c_{+}(K)$ and $t\leq 2c_{+}(K)$, \label{enu1}
\item $p=2k$ for some $k>0$, $i>2k^2(t-2c_{+}(K))+p^2c_{+}(K)$ and $t>2c_{+}(K)$, \label{enu2}
\item $p=2k+1$ for some $k>0$, $i>2k(k+1)(t-2c_{+}(K))+p^2c_{+}(K)$ and $t> 2c_{+}(K)$. \label{enu3}
\end{enumerate}
Condition~(\ref{enu1}) follows from Theorem~\ref{main}, and (\ref{enu2}) and (\ref{enu3}) follow from \cite[Lemma~$4.3$]{tagami1}. 
\end{rem}
\begin{figure}[!htb]
\centering
\includegraphics[scale=0.35]{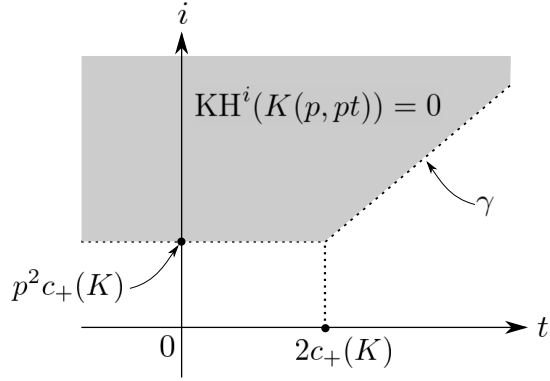}
\caption{$\KH^{i}(K(p, pt))=0$ in the gray area. The slope of the line $\gamma$ is $2k^2$ if $p=2k$ and $2k(k+1)$ if $p=2k+1$. }
\label{fig1}
\end{figure}
\begin{question}\label{ques:main}
For any non-negative knot $K$, are there some $p>0$ and $t\leq 2c_{+}(K)$ such that $i_{\max}(K(p, pt))>0$?
\end{question}
%
%
\begin{question}
For any knot $K$, does the following hold?
\begin{align*}
i_{\max}(K)\leq \frac{i_{\max}(K(p, 2pi_{\max}(K)))}{p^{2}} \leq c_{+}(K). 
\end{align*}
\end{question}
\noindent
Note that the last inequality holds by Theorem~\ref{main}. 
\appendix
\section{Appendix: Other properties of the Khovanov homology}\label{property2}
%
%
In this section, we introduce other properties of the maximal degree of the Khovanov homology. \par
%
%
%

\subsection{$i_{\max}$ versus $c_{+}$}
By the following result, the accuracy of the inequality in Proposition~$\ref{i_max}$ does not depend on the choice of orientations of links. 
\begin{proposition}
For any oriented link $L$, $c_{+}(L)-i_{\max}(L)$ does not depend on the orientation of $L$. 
\end{proposition}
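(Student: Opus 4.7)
The plan is to show that reversing orientations on a subset $S$ of the components of $L$ shifts both $c_{+}(L)$ and $i_{\max}(L)$ by the same integer, so their difference is preserved. Any two orientations of the same underlying unoriented link differ by such a subset reversal, so this suffices.

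First I would set up the bookkeeping. Fix a diagram $D$ of $L$, and let $D'$ denote the same planar diagram regarded as a diagram of the reoriented link $L'$. As unoriented diagrams $D$ and $D'$ are identical; only the signs of some crossings differ. A self-crossing of a single component always keeps its sign under orientation reversal, since both strands flip simultaneously. A crossing between distinct components $K_i$ and $K_j$ flips its sign if and only if exactly one of $i,j$ lies in $S$. Writing $w(D) = c_{+}(D) - c_{-}(D)$ and using $c_{\pm}(D) = \tfrac{1}{2}(c(D) \pm w(D))$, one obtains
\begin{equation*}
c_{+}(D) - c_{+}(D') = \tfrac{1}{2}\bigl(w(D) - w(D')\bigr) = 2 \sum_{i\in S,\, j\notin S} \ell_{ij}(L) =: \Delta,
\end{equation*}
where $\ell_{ij}(L)$ is the linking number of $K_i$ and $K_j$. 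The key observation is that $\Delta$ depends only on $L$ and the subset $S$, not on the chosen diagram $D$.

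Next I would translate this into a statement about the invariants $c_{+}$ and $i_{\max}$. Since the map $D \mapsto D'$ is a bijection between diagrams of $L$ and diagrams of $L'$ shifting $c_{+}$ by the constant $\Delta$, taking minima gives $c_{+}(L) - c_{+}(L') = \Delta$. On the Khovanov side, the unnormalized complex depends only on the unoriented diagram, so $H^{\bullet}(D) = H^{\bullet}(D')$ as chain complexes. Combined with the normalization $\KH^i(L) = H^{i+c_{-}(D)}(D)$, this yields
\begin{equation*}
i_{\max}(L) - i_{\max}(L') = c_{-}(D') - c_{-}(D) = \Delta
\end{equation*}
by the same writhe computation. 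Subtracting the two identities gives $c_{+}(L) - i_{\max}(L) = c_{+}(L') - i_{\max}(L')$, which is the claim.

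The argument is essentially a linking-number accounting, so I do not anticipate a genuine obstacle. The only point that requires a little care is the verification that $c_{+}(D) - c_{+}(D')$ really is diagram-independent; once this is written down, the rest of the proposition follows cleanly from the fact that orientations enter the definition of $\KH$ only through the homological shift by $c_{-}(D)$.
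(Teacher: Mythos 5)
Your proof is correct and follows essentially the same route as the paper: both arguments show that reorienting components shifts $c_{+}$ and $i_{\max}$ by the same linking-number constant ($2\,lk(L\setminus K,K)$ per component in the paper, your $\Delta$ for a subset $S$), so the difference is unchanged. The only cosmetic differences are that you reverse all components of $S$ at once and derive the homological shift directly from the orientation-independence of the unnormalized complex, where the paper instead cites Khovanov's Proposition~28 one component at a time.
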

\begin{proof}
Let $K$ be a component of $L$. 
Let $L'$ be the link obtained from $L$ by reversing the orientation of $K$. 
Khovanov \cite[Proposition~$28$]{khovanov1} proved that 
\begin{align*}
\KH^{i}(L')=\KH^{i+2lk(L\setminus K, K)}(L). 
\end{align*}
Hence we have $i_{\max}(L)=i_{\max}(L')+2lk(L\setminus K, K)$. 
On the other hand, we have 
$c_{+}(L)=c_{+}(L')+2lk(L\setminus K, K)$. 
These imply this proposition. 
\end{proof}
\begin{exam}\label{sample}
Let $T_{p, q}$ be the positive $(p,q)$-torus link. 
For positive integers $k$ and $t$, we have $c_{+}(T_{2k,2kt})=2kt(2k-1)$. 
Indeed, the standard diagram of $T_{2k, 2kt}$ has $2kt(2k-1)$ positive crossings. 
Moreover, each 2-component sublink of $T_{2k, 2kt}$ is $T_{2, 2t}$ and has at least $2t$ positive crossings. 
Since the link $T_{2k, 2kt}$ has $k(2k-1)$ 2-component sublinks, we obtain $c_{+}(T_{2k,2kt})\geq 2kt(2k-1)$. 
On the other hand, Sto{\v s}i{\'c} \cite[Theorem~$3$]{stosic2} showed that $i_{\max}(T_{2k, 2kt})=2k^{2}t$. 
Hence, we have $c_{+}(T_{2k, 2kt})-i_{\max}(T_{2k, 2kt})=2kt(k-1)$. 
\par
Let $T'_{2k, 2kt}$ be the link obtained from $T_{2k, 2kt}$ by reversing the orientations of exactly $k$ components. 
Similarly, we have $c_{+}(T'_{2k, 2kt})=2kt(k-1)$ and $i_{\max}(T'_{2k, 2kt})=0$. 
Hence we have $c_{+}(T'_{2k, 2kt})-i_{\max}(T'_{2k, 2kt})=2kt(k-1)$. 
\end{exam}
\begin{coro}
For any positive integer $N$, there exists some oriented link $L$ such that $c_{+}(L)-i_{\max}(L)>N$. 
\end{coro}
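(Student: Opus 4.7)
The plan is to leverage the computation already carried out in Example~\ref{sample} directly, since it provides an explicit infinite family of links for which the gap $c_{+}(L)-i_{\max}(L)$ can be made arbitrarily large. No new topological or homological input is needed beyond what is recorded there.

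Concretely, I would take $L$ to be the torus link $T_{2k,2kt}$ (with its standard orientation) for suitable integers $k,t\geq 1$. Example~\ref{sample} establishes the equality
\begin{align*}
c_{+}(T_{2k,2kt})-i_{\max}(T_{2k,2kt})=2kt(k-1),
\end{align*}
combining Sto{\v s}i{\'c}'s computation of $i_{\max}$ with the elementary lower bound on $c_{+}$ obtained by summing contributions of two-component sublinks. So the task reduces to choosing the parameters so that the right-hand side exceeds $N$.

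Given a positive integer $N$, I would simply fix $k=2$ and pick any integer $t>N/4$; then
\begin{align*}
c_{+}(T_{4,4t})-i_{\max}(T_{4,4t})=4t>N,
\end{align*}
which is exactly what the corollary asserts. Alternatively one could fix $t=1$ and let $k$ grow, since $2k(k-1)\to\infty$.

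There is essentially no obstacle: the only substantive content lies in Example~\ref{sample}, and once that example is available the corollary is a one-line consequence. The only thing to double-check is that the chosen parameters genuinely make $2kt(k-1)$ strictly exceed $N$ (which forces $k\geq 2$, since $k=1$ collapses the gap to zero), but this is immediate.
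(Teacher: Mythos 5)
Your proposal is correct and follows the same route as the paper: both invoke the equality $c_{+}(T_{2k,2kt})-i_{\max}(T_{2k,2kt})=2kt(k-1)$ from Example~\ref{sample} and specialize to $k=2$ (the paper takes $t=N$ so that the gap is $4N>N$, while you take any $t>N/4$). This is essentially the identical one-line argument.
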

\begin{proof}
By Example~$\ref{sample}$, we have $c_{+}(T_{4,4N})-i_{\max}(T_{4, 4N})>N$. 
\end{proof}
\subsection{Additivity of $i_{\max}$}
We prove the additivity of the maximal degree of the Khovanov homology. 
\begin{proposition}
For any oriented knots $K_{1}$ and $K_{2}$, we have  
\begin{align*}
i_{\max}(K_{1}\sharp K_{2})=i_{\max}(K_{1}\sqcup K_{2})=i_{\max}(K_{1})+i_{\max}(K_{2}), 
\end{align*}
where $K_{1}\sqcup K_{2}$ is the disjoint union of the knots and $K_{1}\sharp K_{2}$ is the connected sum of the knots. 
\end{proposition}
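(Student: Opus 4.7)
The plan is to apply the Künneth-type formulas for Khovanov homology with rational coefficients, first to the disjoint union and then to the connected sum.

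For the disjoint union, I would observe that given diagrams $D_{1}$ and $D_{2}$ of $K_{1}$ and $K_{2}$, the cube of resolutions of $D_{1}\sqcup D_{2}$ is the Cartesian product of the cubes of $D_{1}$ and $D_{2}$, so the Khovanov chain complex factors as a tensor product of bigraded complexes: $C(D_{1}\sqcup D_{2})\cong C(D_{1})\otimes C(D_{2})$. Since $\mathbb{Q}$ is a field, the algebraic Künneth formula yields
\begin{align*}
\KH^{i}(K_{1}\sqcup K_{2})\cong \bigoplus_{i_{1}+i_{2}=i} \KH^{i_{1}}(K_{1})\otimes_{\mathbb{Q}}\KH^{i_{2}}(K_{2}),
\end{align*}
and reading off the maximal nonvanishing homological degree on both sides gives the identity $i_{\max}(K_{1}\sqcup K_{2})=i_{\max}(K_{1})+i_{\max}(K_{2})$.

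For the connected sum, I would invoke Khovanov's connected-sum formula \cite[Proposition~33]{khovanov1}, which over $\mathbb{Q}$ provides a bigraded isomorphism $\KH(K_{1}\sharp K_{2})\otimes A\cong \KH(K_{1})\otimes \KH(K_{2})$, where $A=\mathbb{Q}[x]/(x^{2})$ is the Frobenius algebra concentrated in homological degree $0$. Because tensoring with $A$ leaves the homological grading unchanged, taking the maximal nonvanishing $i$ on both sides and applying the disjoint-union identity above produces $i_{\max}(K_{1}\sharp K_{2})=i_{\max}(K_{1})+i_{\max}(K_{2})$.

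The main technical obstacle is the precise bigraded formulation of Khovanov's connected-sum formula: \cite[Proposition~33]{khovanov1} is phrased as a short exact sequence over $\mathbb{Z}$, so one must check that it splits over $\mathbb{Q}$, which it does since every term is a $\mathbb{Q}$-vector space. An alternative route that avoids this splitting step is to combine the multiplicativity of reduced Khovanov homology under connected sum, namely $\widetilde{\KH}(K_{1}\sharp K_{2})\cong \widetilde{\KH}(K_{1})\otimes \widetilde{\KH}(K_{2})$, with the known rational decomposition $\KH(K)\cong \widetilde{\KH}(K)\otimes A$; either route delivers the same conclusion.
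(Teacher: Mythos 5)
Your treatment of the disjoint union is correct and coincides with the paper's: over $\mathbf{Q}$ the K\"unneth formula (Khovanov's Proposition~33) gives $\KH^{i}(K_{1}\sqcup K_{2})\cong\bigoplus_{p+q=i}\KH^{p}(K_{1})\otimes\KH^{q}(K_{2})$, hence $i_{\max}(K_{1}\sqcup K_{2})=i_{\max}(K_{1})+i_{\max}(K_{2})$.

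The connected-sum half has a genuine gap: both isomorphisms you invoke are false over $\mathbf{Q}$ (they are mod~$2$ phenomena). The splitting $\KH(K)\cong\widetilde{\KH}(K)\otimes A$ already fails for the trefoil, whose unreduced rational homology has total rank $4$ while its reduced homology has rank $3$; in general, for an alternating knot the unreduced rank is $\det(K)+1$ and the reduced rank is $\det(K)$, so the two sides disagree whenever $\det(K)\neq 1$. Likewise $\KH(K_{1}\sharp K_{2})\otimes A\cong\KH(K_{1})\otimes\KH(K_{2})$ fails: for $K_{1}=K_{2}=4_{1}$ the right-hand side has rank $6\cdot 6=36$, while $4_{1}\sharp 4_{1}$ is alternating with determinant $25$, so the left-hand side has rank $2\cdot 26=52$. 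What Khovanov actually proves (his Proposition~34 --- Proposition~33 is the disjoint-union formula) is a short exact sequence of \emph{complexes} $0\to C(K_{1}\sharp K_{2})\{1\}\to C(K_{1}\sqcup K_{2})\to C(K_{1}\sharp K_{2})\{-1\}\to 0$; over a field this splits degreewise as vector spaces but not as complexes, so the connecting homomorphism in the induced long exact sequence need not vanish and no direct-sum decomposition of homology follows. The paper instead argues directly from the long exact sequence $\cdots\to\KH^{i}(K_{1}\sharp K_{2})\to\KH^{i}(K_{1}\sqcup K_{2})\to\KH^{i}(K_{1}\sharp K_{2})\to\KH^{i+1}(K_{1}\sharp K_{2})\to\cdots$: exactness shows $\KH^{i}(K_{1}\sqcup K_{2})=0$ whenever the two flanking groups $\KH^{i}(K_{1}\sharp K_{2})$ vanish, giving $i_{\max}(K_{1}\sharp K_{2})\geq i_{\max}(K_{1}\sqcup K_{2})$; and setting $i_{0}=i_{\max}(K_{1}\sqcup K_{2})$, the sequence forces $\KH^{i_{0}+1}(K_{1}\sharp K_{2})\cong\KH^{i_{0}+l}(K_{1}\sharp K_{2})$ for all $l>0$, which vanishes for large $l$, giving the reverse inequality. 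You would need to replace your tensor-product step with an argument of this kind.
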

\begin{proof}
By \cite[Proposition~$33$]{khovanov1}, we obtain 
\begin{align*}
\KH^{i}(K_{1}\sqcup K_{2})\cong \bigoplus_{p+q=i}\KH^{p}(K_{1})\otimes \KH^{q}(K_{2}).
\end{align*} 
Note that our coefficient ring is the rational number field $\mathbf{Q}$. 
Hence we have $i_{\max}(K_{1}\sqcup K_{2})=i_{\max}(K_{1})+i_{\max}(K_{2})$. 
\par
In \cite[Proposition~$34$]{khovanov1}, the following exact sequence was introduced: 
\begin{multline*}
\cdots \rightarrow \KH^{i}(K_{1}\sharp K_{2})\rightarrow \KH^{i}(K_{1}\sqcup K_{2})\rightarrow \KH^{i}(K_{1}\sharp K_{2}) \\
\rightarrow \KH^{i+1}(K_{1}\sharp K_{2})\rightarrow \KH^{i+1}(K_{1}\sqcup K_{2})\rightarrow KH^{i+1}(K_{1}\sharp K_{2}) \\
\rightarrow \KH^{i+2}(K_{1}\sharp K_{2})\rightarrow \KH^{i+2}(K_{1}\sqcup K_{2})\rightarrow \cdots.
\end{multline*}
Here, we forget the quantum grading of the Khovanov homology. 
By the first line, we have $\KH^{i}(K_{1}\sharp K_{2})\neq 0$ if $\KH^{i}(K_{1}\sqcup K_{2})\neq 0$. 
Hence we obtain $i_{\max}(K_{1}\sharp K_{2})\geq i_{\max}(K_{1}\sqcup K_{2})$. 
\par
Put $i_{0}=i_{\max}(K_{1}\sqcup K_{2})$. Then $\KH^{i_{0}+1}(K_{1}\sqcup K_{2})=\KH^{i_{0}+2}(K_{1}\sqcup K_{2})=0$. 
By the second and third lines, we have 
\[
\KH^{i_{0}+1}(K_{1}\sharp K_{2})=\KH^{i_{0}+2}(K_{1}\sharp K_{2}). 
\]
By repeating this process, for any $l\in\mathbf{Z}_{> 0}$, we obtain 
\[
\KH^{i_{0}+1}(K_{1}\sharp K_{2})=\KH^{i_{0}+l}(K_{1}\sharp K_{2}). 
\]
Since $\KH^{i_{0}+l}(K_{1}\sharp K_{2})=0$ for sufficiently large $l$, we have 
\[
\KH^{i_{0}+l}(K_{1}\sharp K_{2})=0, 
\]
for any $l\in\mathbf{Z}_{>0}$. 
This implies that $i_{0}\geq i_{\max}(K_{1}\sharp K_{2})$. 
Hence we obtain $i_{\max}(K_{1}\sharp K_{2})= i_{\max}(K_{1}\sqcup K_{2})=i_{\max}(K_{1})+i_{\max}(K_{2})$. 
\end{proof}
\subsection{Almost positive knots}
A link diagram is {\it almost positive (negative)} if it has exactly one negative (positive) crossing. 
A link is {\it almost positive (negative)} if it is not a positive (negative) knot and has an almost positive (negative) diagram. 
Then we have the following. 
\begin{coro}\label{almost1}
For any almost negative link $L$, we have $i_{\max}(L)=0$. 
\end{coro}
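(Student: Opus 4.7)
The plan is to combine Proposition~\ref{i_max} with a direct analysis of the top unnormalized Khovanov group of an almost negative diagram of $L$. First I would apply Proposition~\ref{i_max}: since $L$ admits a diagram with exactly one positive crossing, $c_+(L) \leq 1$ and hence $i_{\max}(L) \leq 1$. The opposite bound $i_{\max}(L) \geq 0$ is automatic from the introduction (the Lee class is always non-trivial in homological degree zero), so $i_{\max}(L) \in \{0,1\}$, and it remains to rule out $i_{\max}(L) = 1$.

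Let $D$ be an almost negative diagram of $L$ with unique positive crossing $p$ and $n := c_-(D)$ negative crossings, so that $\KH^1(L) = H^{n+1}(D)$. I would apply the Khovanov long exact sequence at $p$ coming from the short exact sequence $0 \to C(D_1(p))[1]\{2\} \to C(D) \to C(D_0(p))\{1\} \to 0$ of chain complexes; forgetting quantum shifts, it reads
\[
\cdots \to H^{n}(D_0(p)) \xrightarrow{\sigma_*} H^{n}(D_1(p)) \to H^{n+1}(D) \to H^{n+1}(D_0(p)) \to \cdots.
\]
Since both $D_0(p)$ and $D_1(p)$ have only $n$ crossings, $H^{n+1}(D_0(p)) = H^{n+1}(D_1(p)) = 0$, and hence $H^{n+1}(D) \cong \operatorname{coker}(\sigma_*)$, where $\sigma_*$ is induced by the saddle cobordism at $p$. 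The task thus reduces to showing that $\sigma_*$ is surjective.

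The structure of $\sigma_*$ in top degree is controlled by the local merge/split behavior of the saddle at $p$. Because $p$ is positive, the oriented resolution $D_0(p)$ is a purely negative diagram, and its all-one resolution coincides with the Seifert resolution of $D$. The saddle at $p$ then either merges two Seifert circles of $D$ into one in the all-one resolution of $D_1(p)$---in which case $\sigma_*$ is tautologically surjective in top degree via $m\colon V \otimes V \to V$---or splits one Seifert circle into two, in which case $\sigma_*$ alone does not suffice. The main obstacle I foresee is the split case: one must either combine $\sigma_*$ with the remaining components of $d^n \colon C^n(D) \to C^{n+1}(D)$ coming from the $n$ negative crossings to cover the missing factor, or argue that the local configuration (a positive crossing whose oriented smoothing connects arcs of a single Seifert circle) forces $D$ to be equivalent, via Reidemeister or flype moves, to a purely negative diagram, contradicting the hypothesis that $L$ is not a negative link.
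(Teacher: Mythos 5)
Your first step (Proposition~\ref{i_max} plus nontriviality of degree zero gives $i_{\max}(L)\in\{0,1\}$) coincides with the paper's. The rest diverges, and it contains a genuine gap: after reducing, via the long exact sequence at the positive crossing $p$, to the surjectivity of the saddle map $\sigma_*\colon H^{n}(D_0(p))\to H^{n}(D_1(p))$, you handle only the case where the saddle merges two circles of the Seifert resolution, and you explicitly leave the split case open, offering two unexecuted alternatives. Neither alternative is routine. The first (bringing in the other components of $d^{n}$ to cover the missing tensor factor) is exactly the computation of the top unnormalized homology group of $D$, i.e.\ it amounts to re-proving the relevant case of Khovanov's adequacy criterion rather than citing it. The second (claiming that a positive crossing whose oriented smoothing joins arcs of a single Seifert circle forces $D$ to be reducible to a negative diagram) is not justified and is doubtful as stated: such a crossing need not be nugatory, so no Reidemeister or flype simplification is guaranteed, and without it you get no contradiction with $L$ not being a negative link. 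So the split case --- which is where all the content lies --- is not closed.

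For comparison, the paper avoids the skein sequence entirely: it observes that $i_{\max}(L)=1$ would force $i_{\max}(L)=c_{+}(D)$ for an almost negative diagram $D$, invokes Khovanov's Proposition~36 ($i_{\max}(L)=c_{+}(D)$ if and only if $D$ is $+$adequate), and notes that a reduced almost negative diagram is never $+$adequate. If you want to salvage your route, the cleanest fix is to replace your case analysis by that citation; otherwise you must actually compute $\operatorname{coker}(\sigma_*)$ in the split case, which requires identifying the top homology of the $n$-crossing diagrams $D_0(p)$ and $D_1(p)$ and the induced map between them --- none of which is done in the proposal.
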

\begin{proof}
By Proposition~$\ref{i_max}$, we have $i_{\max}(L)=0$ or $1$. 
For contradiction, assume that $i_{\max}(L)=1$. 
Then, any almost negative diagram $D$ of $L$ satisfies $i_{\max}(L)=c_{+}(D)=1$. 
In \cite[Proposition~$36$]{khovanov1}, Khovanov proved that $i_{\max}(L)=c_{+}(D)$ if and only if the diagram $D$ is $+$adequate (for the definition of $+$adequate diagrams, for example, see \cite[Definition~$3$]{khovanov1}). 
However, any reduced almost negative link diagram is not $+$adequate. 
Hence, $i_{\max}(L)=0$. 
\end{proof}
\begin{remark}
The Khovanov homology and the Rasmussen invariant for an almost positive link are studied in \cite{abe-tagami1} and \cite{tagami7}. 
\end{remark}
\noindent{\bf Acknowledgements: }
The author would like to thank the referees for their helpful comments. 
\bibliographystyle{amsplain}
\bibliography{mrabbrev,tagami}
\end{document}